\documentclass[a4paper, 12pt, oneside]{amsart}

\usepackage[T1]{fontenc}
\usepackage[utf8]{inputenc}
\usepackage{mathptmx}       
\usepackage{helvet}         
\usepackage{courier}        
\usepackage{type1cm}        
%
\usepackage{graphicx}        

\usepackage{amssymb, amsmath}
\usepackage{enumitem}
\usepackage[DIV=10]{typearea}

\newcommand{\Z}{\mathbb{Z}}
\newcommand{\N}{\mathbb{N}}
\newcommand{\D}{\mathcal{D}}
\newcommand{\G}{\mathcal{G}}
\newcommand{\C}{\mathbb{C}}

\newcommand{\reg}{{\operatorname{reg}}}
\newcommand{\sing}{{\operatorname{sing}}}

\theoremstyle{theorem}
\newtheorem{theorem}{Theorem}
\newtheorem{proposition}[theorem]{Proposition}
\newtheorem{corollary}[theorem]{Corollary}

\theoremstyle{definition}

\newtheorem{remark}[theorem]{Remark}
\newtheorem*{question}{Question}

\title{$*$-isomorphism of Leavitt path algebras over $\mathbb{Z}$}

\author{Toke Meier Carlsen}
\address{University of the Faroe Islands\\Department of Science and Technology\\
N\'oat\'un 3\\ FO-100 T\'orshavn\\ Faroe Islands}
\email{toke.carlsen@gmail.com}

\date{\today}

\begin{document}

\begin{abstract}
We characterise when the Leavitt path algebras over $\Z$ of two arbitrary countable directed graphs are $*$-isomorphic by showing that two Leavitt path algebras over $\Z$ are $*$-isomorphic if and only if the corresponding graph groupoids are isomorphic (if and only if there is a diagonal preserving isomorphism between the corresponding graph $C^*$-algebras). We also prove that any $*$-homomorphism between two Leavitt path algebras over $\Z$ maps the diagonal to the diagonal. Both results hold for more general subrings of $\C$ than just $\Z$.
\newline
\newline
\emph{Keywords:} Leavitt path algebras, graph groupoids, graph $C^*$-algebras.
\end{abstract}

\maketitle

\section{Introduction}

\emph{Leavitt path algebras} were introduced independently in \cite{AA} and \cite{AMP} as algebraic analogues of graph $C^*$-algebras and have since then attracted a lot of attention, both in connection with graph $C^*$-algebras and as interesting algebraic objects on their own (they are called Leavitt path algebras because they generalise certain algebras studied by Leavitt in \cite{L1,L2,L3}). 

The Leavitt path algebra of a directed graph $E$ over a unital commutative ring $R$ is a universal $R$-algebra $L_R(E)$ whose generators and relations are determined by $E$ (see Section \ref{lpa} for the precise definition of $L_R(E)$). Each involution (for example the identity map) of $R$ gives rise to an involution of $L_R(E)$ which is therefore a $*$-algebra. 

It is natural to ask when two Leavitt path algebras are ($*$-)isomorphic. Abrams and Tomforde showed in \cite{AT} that if two Leavitt path algebras $L_\C(E)$ and $L_\C(F)$ over $\C$ are $*$-isomorphic, then so are the corresponding graph $C^*$-algebras $C^*(E)$ and $C^*(F)$ (their proof is easily generalised to subrings of $\C$ that are closed under complex conjugation and contains $1$). 

Johansen and Sørensen gave in \cite{JS} the first example of two Leavitt path algebras that are not $*$-isomorphic in spite of the corresponding graph $C^*$-algebras being isomorphic, when they showed that the Leavitt path algebras over $\Z$ of $E_2$ and $E_{2-}$ are not $*$-isomorphic (that $C^*(E_2)$ and $C^*(E_{2-})$ are isomorphic was proved by Rørdam in \cite{Ror} as an important step towards classifying simple Cuntz-Krieger algebras). 

Each Leavitt path algebra $L_R(E)$ contains a certain abelian subalgebra $D_R(E)$ called \emph{the diagonal}. Johansen and Sørensen obtained their result by showing that when $E$ is a finite graph and $R$ is a subring of $\C$ satisfying certain conditions, then every projection in $L_R(E)$ belongs to $D_R(E)$. We generalise this result to arbitrary graphs $E$ and more general subrings $R$ of $\C$ (Proposition \ref{pro}). It follows than any $*$-homomorphism between two Leavitt path algebras $L_\Z(E)$ and $L_\Z(F)$ over $\Z$ is \emph{diagonal preserving} in the sense that it maps $D_\Z(E)$ to $D_\Z(F)$ (Corollary \ref{cor}).   

From a countable directed graph $E$, one can construct a topological groupoid $\G_E$ such that the $C^*$-algebra of $\G_E$ is isomorphic to the graph $C^*$-algebra $C^*(E)$, and the Steinberg algebra $A_R(\G_E)$ is isomorphic to $L_R(E)$. It is proved in \cite{BCW} that two graph $C^*$-algebras $C^*(E)$ and $C^*(F)$ are isomorphic in a diagonal preserving way if and only the corresponding graph groupoids $\G_E$ and $\G_F$ are isomorphic, and it is proved in \cite{Ste} (see also \cite{ABHS}, \cite{BCH}, and \cite{CR}) that if $R$ is indecomposable and reduced (in particular if $R$ is a integral domain), then there is a diagonal preserving isomorphism between $L_R(E)$ and $L_R(F)$ if and only if $\G_E$ and $\G_F$ are isomorphic. 

By using Corollary \ref{cor}, we show in Theorem \ref{theorem} that two Leavitt path algebras $L_\Z(E)$ and $L_\Z(F)$ of countable graphs are $*$-isomorphic if and only if the groupoids $\G_E$ and $\G_F$ are isomorphic (if and only if there is a diagonal preserving isomorphism between the graph $C^*$-algebras $C^*(E)$ and $C^*(F)$). As is the case with Proposition \ref{pro} and Corollary \ref{cor}, the result in Theorem \ref{theorem} holds for more general subrings of $\C$ than just $\Z$.

The rest of the paper is organised in the following way. In Section \ref{dn} we recall the definitions of directed graphs, the Leavitt path algebras, graph $C^*$-algebras, and graph groupoids; and introduce notation. In Section \ref{sec:result} we present our main result (Theorem \ref{theorem}) and discuss how it is related to \emph{orbit equivalence} of graphs and results in \cite{ABHS}, \cite{AER}, \cite{BCH}, \cite{BCW}, \cite{CR}, \cite{CW}, and \cite{Ste} (Remarks \ref{remark} and \ref{remark2}). We also ask the question if there exist Leavitt path algebras that are isomorphic without the corresponding graph groupoid being isomorphic. If the answer to this question is ``No'', then both \emph{The Isomorphism Conjecture for Graph Algebras} and \emph{The Morita Conjecture for Graph Algebras} introduced in \cite{AT} are true. Finally we present and prove in Section \ref{sec:proof}, Proposition \ref{pro} and Corollary \ref{cor} before we give the proof of Theorem \ref{theorem}. 

\section{Definitions and notation} \label{dn}
We recall in this section the definition of a directed graph, as well as the definitions of the Leavitt path algebra, the graph $C^*$-algebra, and the graph groupoid of a graph; and introduce some notation. Most of this section is copied from \cite{BCW}.

\subsection{Directed graphs}
A \emph{directed graph} is a quadruple $E=(E^0,E^1,s,r)$ where $E^0$ and $E^1$ are sets, and $s$ and $r$ are maps from $E^1$ to $E^0$. A graph $E$ is said to be \emph{countable} if $E^0$ and $E^1$ are countable. 

A {\em path} $\mu$ of length $n$ in $E$ is a sequence of edges $\mu=\mu_1\dots\mu_n$ such that $r(\mu_i)=s(\mu_{i+1})$ for $1\le i\le n-1$. The set of paths of length $n$ is denoted $E^n$. We denote by $|\mu|$ the length of $\mu$. The range and source maps extend naturally to paths: $s(\mu):=s(\mu_1)$ and $r(\mu):=r(\mu_n)$. We regard the elements of $E^0$ as path of length 0, and for $v \in E^0$ we set $s(v):=r(v):=v$. For $v\in E^0$ and $n\in\N_0$ we denote by $vE^n$ the set of paths of length $n$ with source $v$. We define $E^*:=\bigcup_{n\in\N_0}E^n$ to be the collection of all paths with finite length. We define $E^0_\reg:=\{v\in E^0:vE^1\text{ is finite and nonempty}\}$ and $E^0_\sing:=E^0\setminus E^0_\reg$. If $\mu=\mu_1\mu_2\cdots\mu_m, \nu=\nu_1\nu_2\cdots\nu_n\in E^*$ and $r(\mu)=s(\nu)$, then we let $\mu\nu$ denote the path $\mu_1\mu_2\cdots\mu_m\nu_1\nu_2\cdots\nu_n$. 
A {\em loop} (also called a \emph{cycle}) in $E$ is a path $\mu\in E^*$ such that $|\mu|\ge 1$ and $s(\mu)=r(\mu)$. An edge $e$ is an {\em exit} to the loop $\mu$ if there exists $i$ such that $s(e)=s(\mu_i)$ and $e\not=\mu_i$. A graph is said to satisfy \emph{condition (L)} if every loop has an exit. 

An \emph{infinite path} in $E$ is an infinite sequence $x_1x_2\dots $ of edges in $E$ such that $r(e_i)=s(e_{i+1})$ for all $i$. We let $E^\infty$ be the set of all infinite paths in $E$. The source map extends to $E^\infty$ in the obvious way. We let $|x|=\infty$ for $x\in E^\infty$. The \emph{boundary path space} of $E$ is the space 
\begin{equation*}
	\partial E:=E^\infty\cup\{\mu\in E^*:r(\mu)\in E^0_\sing\}.
\end{equation*} 
If $\mu=\mu_1\mu_2\cdots\mu_m\in E^*$, $x=x_1x_2\cdots\in E^\infty$ and $r(\mu)=s(x)$, then we let $\mu x$ denote the infinite path  $\mu_1\mu_2\cdots\mu_m x_1x_2\cdots \in E^\infty$.

For $\mu\in E^*$, the \emph{cylinder set} of $\mu$ is the set
\begin{equation*}
	Z(\mu):=\{\mu x\in\partial E:x\in r(\mu)\partial E\},
\end{equation*} 
where $r(\mu)\partial E:=\{x \in \partial E : r(\mu)=s(x)\}$. Given $\mu\in E^*$ and a finite subset $F\subseteq r(\mu)E^1$ we define
\begin{equation*}
	Z(\mu\setminus F):=Z(\mu)\setminus\left(\bigcup_{e\in F}Z(\mu e)\right).
\end{equation*}

The boundary path space $\partial E$ is a locally compact Hausdorff space with the topology given by the basis $\{Z(\mu\setminus F): \mu\in E^*,\ F\text{ is a finite subset of }r(\mu)E^1\}$, and each such $Z(\mu\setminus F)$ is compact and open (see \cite[Theorem 2.1 and Theorem 2.2]{Web}).

\subsection{Graph $C^*$-algebras}
The {\em graph $C^*$-algebra} of a directed graph $E$ is the universal $C^*$-algebra $C^*(E)$ generated by mutually orthogonal projections $\{p_v:v\in E^0\}$ and partial isometries $\{s_e:e\in E^1\}$ satisfying
\begin{enumerate}
\item[(CK1)] $s_e^*s_e=p_{r(e)}$ for all $e\in E^1$;
\item[(CK2)] $s_es_e^*\le p_{s(e)}$ for all $e\in E^1$;
\item[(CK3)] $\displaystyle{p_v=\sum_{e\in vE^1}s_es_e^*}$ for all $v\in E^0_\reg$.
\end{enumerate}
If $\mu=\mu_1\cdots\mu_n\in E^n$ and $n\ge 2$, then we let $s_\mu:=s_{\mu_1}\cdots s_{\mu_n}$. Likewise, we let $s_v:=p_v$ if $v\in E^0$. Then $\operatorname{span}_\C\{s_\mu s_\nu^*:\mu,\nu\in E^*,\ r(\mu)=r(\nu)\}$ is dense in $C^*(E)$.
We define $\mathcal{D}(E)$ to be the closure in $C^*(E)$ of $\operatorname{span}_\C\{s_\mu s_\mu^*:\mu\in E^*\}$. Then $\mathcal{D}(E)$ is an abelian $C^*$-subalgebra of $C^*(E)$, and it is isomorphic to the $C^*$-algebra $C_0(\partial E)$. We furthermore have that $\mathcal{D}(E)$ is a maximal abelian sub-algebra of $C^*(E)$ if and only if $E$ satisfies condition (L) (see \cite[Example 3.3]{NR}).

\subsection{Leavitt path algebras} \label{lpa}
Let $E$ be a directed graph and $R$ a commutative ring with a unit. The \emph{Leavitt path algebra} of $E$ over $R$ is the universal $R$-algebra $L_R(E)$ generated by pairwise orthogonal idempotents $\{v:v\in E^0\}$ and elements $\{e,e^*:e\in E^1\}$ satisfying
\begin{enumerate}
\item[(LP1)] $e^*f=0$ if $e\ne f$;
\item[(LP2)] $e^*e=r(e)$;
\item[(LP3)] $s(e)e=e=er(e)$;
\item[(LP4)] $e^*s(e)=e^*=r(e)e^*$;
\item[(LP5)] $v=\sum_{e\in vE^1}ee^*$ if $v\in E^0_\reg$.
\end{enumerate}
If $\mu=\mu_1\cdots\mu_n\in E^n$ and $n\ge 2$, then we let $\mu$ be the element $\mu_1\cdots\mu_n\in L_R(E)$ and $\mu^*$ the element $\mu_n^*\cdots\mu_1^*\in L_R(E)$. For $v\in E^0$, we let $v^*:=v$. Then $L_R(E)=\operatorname{span}_R\{\mu \nu^*:\mu,\nu\in E^*,\ r(\mu)=r(\nu)\}$. There is a $\Z$-grading $\bigoplus_{n\in\Z}L_R(E)_n$ of $L_R(E)$ given by $L_R(E)_n=\operatorname{span}_R\{\mu \nu^*:\mu,\nu\in E^*,\ r(\mu)=r(\nu),\ |\mu|-|\nu|=n\}$ (see \cite[Section 3.3]{T}).

We define $D_R(E):=\operatorname{span}_R\{\mu\mu^*:\mu\in E^*\}$. Then $D_R(E)$ is an abelian subalgebra of $L_R(E)$, and it is maximal abelian if and only if $E$ satisfies condition (L) (see \cite[Proposition 3.14 and Theorem 3.22]{CN}). 
If $R$ is a subring of $\C$ that is closed under complex conjugation and contains $1$, then $\mu\nu^*\mapsto\nu\mu^*$ extends to a conjugate linear involution on $L_R(E)$, i.e. $L_R(E)$ is a $*$-algebra. There is an injective $*$-homomorphism $\iota_{L_R(E)}\to C^*(E)$ mapping $v$ to $p_v$ and $e$ to $s_e$ for $v\in E^0$ and $e\in E^1$ (see \cite[Theorem 7.3]{T}).

\subsection{Graph groupoids}
Let $E$ be a directed graph. For $n\in\N_0$, let $\partial E^{\ge n}:=\{x\in\partial E: |x|\ge n\}$. Then $\partial E^{\ge n}= \cup_{\mu \in E^n} Z(\mu)$ is an open subset of $\partial E$. We define the \emph{shift map} on $E$ to be the map $\sigma_E:\partial E^{\ge 1}\to\partial E$ given by $\sigma_E(x_1x_2x_3\cdots)=x_2x_3\cdots$ for $x_1x_2x_3\cdots\in\partial E^{\ge 2}$ and $\sigma_E(e)=r(e)$ for $e\in\partial E\cap E^1$. For $n\ge 1$, we let $\sigma_E^n$ be the $n$-fold composition of $\sigma_E$ with itself. We let $\sigma_E^0$ denote the identity map on $\partial E$. Then $\sigma_E^n$ is a local homeomorphism for all $n\in\N$. When we write $\sigma_E^n(x)$, we implicitly assume that $x\in\partial E^{\ge n}$.  

The \emph{graph groupoid} of a countable directed graph is the locally compact, Hausdorff, \'{e}tale topological groupoid 
\begin{equation*}
	\G_E=\{(x,m-n,y): x,y\in\partial E,\ m,n\in\N_0,\text{ and } \sigma^m(x)=\sigma^n(y)\},
\end{equation*}
with product $(x,k,y)(w,l,z):=(x,k+l,z)$ if $y=w$ and undefined otherwise, and inverse given by $(x,k,y)^{-1}:=(y,-k,x)$. The topology of $\G_E$ is generated by subsets of the form 
\[Z(U,m,n,V):=\{(x,k,y)\in\G_E:x\in U,\ k=m-n,\ y\in V,\ \sigma_E^m(x)=\sigma_E^n(y)\}\] 
where $m,n\in\N_0$, $U$ is an open subset of $\partial E^{\ge m}$ such that the restriction of $\sigma_E^m$ to $U$ is injective, and $V$ is an open subset of $\partial E^{\ge n}$ such that the restriction of $\sigma_E^n$ to $V$ is injective, and $\sigma_E^m(U)=\sigma_E^n(V)$. The map $x\mapsto (x,0,x)$ is a homeomorphism from $\partial E$ to the unit space $\G_E^0$ of $\G_E$. There is a $*$-isomorphism from the $C^*$-algebra of $\G_E$ to $C^*(E)$ that maps $C_0(\G_E^0)$ onto $\mathcal{D}(E)$ (see \cite[Proposition 2.2]{BCW} and \cite[Proposition 4.1]{KPRR}), and a $*$-isomorphism from the Steinberg algebra $A_R(\G_E)$ of $\G_E$ to $L_R(E)$ that maps $\operatorname{span}_R\{1_{Z(Z(\mu),0,0,Z(\mu))}:\mu\in E^*\}$ onto $D_R(E)$ (see \cite[Theorem 2.2]{BCH} and \cite[Example 3.2]{CS}).

\section{The main result}\label{sec:result}

We say that a subring $R$ of $\C$ that is closed under complex conjugation and contains $1$ is \emph{kind} if whenever $\lambda_0,\lambda_1,\dots,\lambda_n\in R$ satisfy $\lambda_0=\sum_{i=0}^n|\lambda_i|^2$, then $\lambda_1=\dots=\lambda_n=0$.

Notice that if a subring $R$ of $\C$ is closed under complex conjugation and contains $1$ and has an essentially unique partition of the unit as defined in \cite{JS}, then it is kind (because if $\lambda_0=\sum_{i=0}^n|\lambda_i|^2$, then $|\lambda_0-1|^2+\sum_{i=1}^n|\lambda_i|^2+\sum_{i=0}^n|\lambda_i|^2=1$). In particular, $\Z$ is kind. The subring of $\C$ generated by $1,\pi^{-1}$ and $\sqrt{1-\pi^{-2}}$ is an example of a kind subring that does not have an essentially unique partition of the unit.

\begin{theorem}\label{theorem}
Let $E$ and $F$ be countable directed graphs, and let $R$ and $S$ be subrings of $\C$ that are closed under complex conjugation and contain $1$, and assume that $R$ is kind. Then the following are equivalent.
\begin{enumerate}
	\item[(1)] The Leavitt path algebras $L_R(E)$ and $L_R(F)$ of $E$ and $F$ are isomorphic as $*$-algebras.
	\item[(2)] There is a $*$-algebra isomorphism $\pi:L_S(E)\to L_S(F)$ such that $\pi(D_S(E))=D_S(F)$.
	\item[(3)] There is a $*$-isomorphism $\phi:C^*(E)\to C^*(F)$ such that $\phi(\D(E))=\D(F)$.
	\item[(4)] The graph groupoids $\mathcal{G}_E$ and $\mathcal{G}_F$ are isomorphic as topological groupoids.
\end{enumerate}
\end{theorem}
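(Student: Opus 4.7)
The plan is to show that condition (4) is equivalent to each of (1), (2), and (3). Two of these equivalences are already available in the literature. The equivalence (3) $\Leftrightarrow$ (4) is the main theorem of \cite{BCW}. The equivalence (2) $\Leftrightarrow$ (4) follows from the Steinberg algebra results of \cite{Ste} (see also \cite{ABHS, BCH, CR}): every subring $S$ of $\C$ containing $1$ is an integral domain, hence indecomposable and reduced, which is precisely what is needed to pass between (2) and (4). Note that neither of these implications uses the kindness hypothesis on $R$.

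For the remaining pair (1) $\Leftrightarrow$ (4), the direction (4) $\Rightarrow$ (1) is immediate: specialising the Steinberg result to the ring $R$ itself (again an integral domain), an isomorphism $\G_E \cong \G_F$ yields a diagonal preserving $*$-isomorphism $L_R(E) \to L_R(F)$, which in particular is a $*$-isomorphism, giving (1).

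The only direction that requires new input is (1) $\Rightarrow$ (4). Suppose $\pi : L_R(E) \to L_R(F)$ is a $*$-algebra isomorphism. The strategy is to promote $\pi$ to a diagonal preserving isomorphism, after which Steinberg's theorem applied to $R$ delivers (4). Concretely, $D_R(E)$ is the $R$-linear span of the projections $\mu\mu^*$ for $\mu \in E^*$; each image $\pi(\mu\mu^*)$ is a projection in $L_R(F)$; and by Proposition \ref{pro} (which applies precisely because $R$ is kind) every projection in $L_R(F)$ lies in $D_R(F)$. Hence $\pi(D_R(E)) \subseteq D_R(F)$, and applying the same reasoning to $\pi^{-1}$ gives the reverse inclusion. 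We are now in the situation of (2) with $S=R$, and Steinberg's theorem delivers (4).

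The main obstacle is therefore isolated entirely in Proposition \ref{pro}: the assertion that every projection in a Leavitt path algebra over a kind subring belongs to its diagonal. Once this is in hand, Corollary \ref{cor} follows by exactly the argument above, and Theorem \ref{theorem} is assembled by gluing this new input onto the previously known equivalences from \cite{BCW} and \cite{Ste}. In particular, the kindness hypothesis plays a role only in the single step (1) $\Rightarrow$ (2) with $S=R$, which is exactly where one cannot assume any compatibility between $\pi$ and the diagonals a priori.
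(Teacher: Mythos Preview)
Your proof is correct and uses the same key input---Proposition \ref{pro} (equivalently Corollary \ref{cor})---to upgrade an arbitrary $*$-isomorphism to a diagonal-preserving one. The overall architecture differs from the paper's only in how condition (2) is linked to the others. The paper proves $(2)\Rightarrow(3)$ directly: following \cite[Theorem 4.4]{AT}, the $*$-algebra isomorphism $\pi:L_S(E)\to L_S(F)$ extends to a $*$-isomorphism $\phi:C^*(E)\to C^*(F)$, and one checks that $\phi$ carries $\mathcal{D}(E)$ onto $\mathcal{D}(F)$ because $\pi$ already does so on the dense spanning set $\{\mu\mu^*\}$. You instead route $(2)\Rightarrow(4)$ through the Steinberg-algebra reconstruction theorem of \cite{Ste}, using that any subring of $\C$ containing $1$ is an integral domain, hence indecomposable and reduced. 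Both routes are valid; yours avoids the $C^*$-completion step entirely and keeps the argument purely algebraic, while the paper's route makes the connection to graph $C^*$-algebras explicit and is somewhat more self-contained (it does not need the full strength of the reconstruction theorems in \cite{Ste, ABHS, BCH, CR}). For $(4)\Rightarrow(1),(2)$ the paper cites \cite[Example 3.2]{CS} rather than the Steinberg/CR results you invoke, but this is a cosmetic difference.
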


Notice that if $R=\Z$, then any $*$-ring isomorphism between $L_R(E)$ and $L_R(F)$ is automatically a $*$-algebra isomorphism. The proof of Theorem \ref{theorem} is given in the next section.

\begin{remark}\label{remark}
Let $\mathcal{P}_E$ and $\mathcal{P}_F$ be the pseudogroups introduced in \cite[Section 3]{BCW}. It is shown in \cite[Theorem 5.3]{AER} that Conditions (3) and (4) each are equivalent to the following two conditions.
\begin{enumerate}
	\item[(5)] There is a homeomorphism $h:\partial E\to\partial F$ that preservs isolated eventually periodic points such that $\{h\circ\alpha\circ h^{-1}: \alpha\in \mathcal{P}_E\}=\mathcal{P}_F$.
	\item[(6)] There is an orbit equivalence $h:\partial E\to\partial F$ as in \cite[Definition 3.1]{BCW} that preservs isolated eventually periodic points.
\end{enumerate}
It follows from \cite[Corollary 4.6]{CW} and the discussion right before \cite[Proposition 3.1]{CW} that if either $E$ and $F$ each satisfy condition (L), or $E$ and $F$ each have only finitely many vertices and no sinks, then any homeomorphism $h:\partial E\to\partial F$ automatically preserves isolated eventually periodic points.
\end{remark}

\begin{remark}\label{remark2}
Suppose $T$ is a commutative ring with a unit and an involution $t\mapsto \overline{t}$ that fixes the unit and the zero element (this could for instance be the identity map). Then there is involution on $L_T(E)$ given by $t\mu\nu^*\mapsto \overline{t}\nu\mu^*$ for $t\in T$ and $\mu,\nu\in E^*$. Thus $L_T(E)$ is a $*$-algebra. 

Condition (4) implies the following condition (see \cite[Theorem 4.1]{CR}). 
\begin{enumerate}
	\item[(7)] There is a $*$-algebra isomorphism $\eta:L_T(E)\to L_T(F)$ such that $\eta(D_T(E))=D_T(F)$.
\end{enumerate}
Obviously, condition (7) implies the following condition.
\begin{enumerate}
	\item[(8)] There is a ring isomorphism $\zeta:L_T(E)\to L_T(F)$ such that $\zeta(D_T(E))=D_T(F)$.
\end{enumerate}
It is shown in \cite[Theorem 6.1]{Ste} (see also \cite[Corollary 4.4]{ABHS}, \cite[Theorem 6.2]{BCH}, and \cite[Corollary 4.1]{CR}) that if $T$ is indecomposable and either $E$ satisfies condition (L), or $T$ is reduced, then (8) implies (4) and (1)--(8) are all equivalent (notice that if $T$ is an integral domain, then it is indecomposable and reduced).
\end{remark}

In the light of Theorem \ref{theorem} and the above remarks, the following question seems natural.

\begin{question}
Do there exist directed graphs $E$ and $F$ and a commutative ring $R$ with a unit such that $L_R(E)$ and $L_R(F)$ are isomorphic as rings, but $\mathcal{G}_E$ and $\mathcal{G}_F$ are not isomorphic?
\end{question}

It follows from Theorem \ref{theorem}, \cite[Theorem 4.2]{CRS}, and Theorem 5 and part 2 of the remarks following Corollary 7 in \cite{AAM} that if the answer to the above question is ``No'', then both \emph{The Isomorphism Conjecture for Graph Algebras} and \emph{The Morita Conjecture for Graph Algebras} introduced in \cite{AT} are true (we cannot rule out the possibility that the conjectures are true even if the answer to the above question is ``Yes'').

\section{Proof of the main result} \label{sec:proof}

Let $E$ be a directed graph and $R$ a subring of $\C$ that is closed under complex conjugation and contains $1$. As in \cite{JS}, we say that $p\in L_R(E)$ is a \emph{projection} if $p=p^*=p^2$.

For the proof of Theorem \ref{theorem} we need the following generalisation of \cite[Theorem 5.6]{JS}.

\begin{proposition}\label{pro}
Let $E$ be a directed graph and let $R$ be a subring of $\C$ that is closed under complex conjugation, contains $1$ and is kind. If $p\in L_R(E)$ is a projection, then $p\in D_R(E)$.
\end{proposition}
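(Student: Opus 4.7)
My plan is to transfer the problem to the Steinberg algebra picture via the $*$-isomorphism $L_R(E) \cong A_R(\G_E)$ noted at the end of Section \ref{dn}, under which $D_R(E)$ corresponds to the compactly supported locally constant $R$-valued functions on the unit space $\G_E^0$. It then suffices to show that every $f \in A_R(\G_E)$ satisfying $f = f^* = f*f$ is supported on $\G_E^0$. Because the graph groupoid is defined in Section \ref{dn} only for countable $E$, for the stated generality one first passes to a countable subgraph $E'$ containing the finitely many edges appearing in $p$ and closed under adjoining $vE^1$ whenever a vertex $v$ that is regular in $E$ lies in $E'$. Then $L_R(E') \hookrightarrow L_R(E)$ is a $*$-subalgebra inclusion containing $p$, reducing the question to the countable case.

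The computational heart is the evaluation of $pp^*$ at a unit. Using $p = p^*$ we have $f = f*f = f*f^*$; the convolution formula together with $f^*(g) = \overline{f(g^{-1})}$ shows that the only composable pairs with product $u \in \G_E^0$ are of the form $(g,g^{-1})$ with $r(g) = u$, so
\begin{equation*}
f(u) = (f*f^*)(u) = \sum_{g \in \G_E,\ r(g)=u} f(g)\overline{f(g)} = \sum_{g \in \G_E,\ r(g)=u} |f(g)|^2.
\end{equation*}
Only finitely many terms are nonzero, because $f$ is a finite $R$-linear combination of characteristic functions of compact open bisections and each such bisection meets $r^{-1}(u)$ in at most one point.

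Fix $u \in \G_E^0$, put $g_0 := u$, and let $g_1, \dots, g_n$ enumerate the distinct elements of $\G_E \setminus \{u\}$ with $r(g_i) = u$ and $f(g_i) \neq 0$. Setting $\lambda_i := f(g_i) \in R$, the identity above reads $\lambda_0 = \sum_{i=0}^n |\lambda_i|^2$, and the kindness of $R$ forces $\lambda_1 = \dots = \lambda_n = 0$; since the $g_i$ were chosen with $f(g_i) \neq 0$, this is possible only if $n = 0$. Thus $f$ vanishes on $r^{-1}(u) \setminus \{u\}$ for every $u \in \G_E^0$, whence $\operatorname{supp}(f) \subseteq \G_E^0$ and $p \in D_R(E)$. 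The only nontrivial obstacle is the countability reduction at the outset, which is standard but requires checking that the closure procedure yields a genuine $*$-subalgebra inclusion; once inside the Steinberg algebra, the entire new content of the proposition beyond \cite[Theorem 5.6]{JS} is packed into the single application of the kindness hypothesis above.
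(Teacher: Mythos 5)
Your central computation is correct and is a genuinely different route from the paper's. The paper never passes to the groupoid: it writes $p=\sum_{(\alpha,\beta)\in A\times B}\lambda_{(\alpha,\beta)}\alpha\beta^*$ directly in $L_R(E)$, normalises the lengths of the elements of $B$, and runs a minimal-counterexample argument comparing the degree-zero graded components of $\gamma_\beta^*p\gamma_\beta=\gamma_\beta^*p^*p\gamma_\beta$, invoking kindness once for each offending $\beta$. You replace that bookkeeping by the single identity $f(u)=\sum_{r(g)=u}|f(g)|^2$ obtained by evaluating $f=f*f^*$ at a unit $u$; the finiteness of the fibrewise sum via compact open bisections, the application of kindness, and the identification of the functions supported on $\G_E^0$ with $D_R(E)$ (the span of the indicators $1_{Z(Z(\mu),0,0,Z(\mu))}$ is all locally constant compactly supported functions on the unit space, since the sets $Z(\mu\setminus F)$ form a basis of compact open sets closed under intersection) are all sound. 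Your argument is arguably more conceptual and proves more: for any ample Hausdorff groupoid $\G$ and kind $R$, every projection of $A_R(\G)$ lies in $A_R(\G^{(0)})$. What the paper's approach buys is that it works verbatim for arbitrary graphs within the paper's own framework, with no appeal to the groupoid model at all.

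The genuine gap is in the countability reduction, precisely the step you dismiss as standard. Your closure rule only adjoins $vE^1$ when $v$ is regular in $E$. If the chosen representation of $p$ involves an edge whose source $v$ is an infinite emitter of $E$, then $v\in E'^0$ and $0<|vE'^1|<\infty$, so $v$ is regular in $E'$ although singular in $E$. Relation (LP5) at $v$ is then imposed in $L_R(E')$ but fails in $L_R(E)$: for any $f\in vE^1\setminus vE'^1$ one has $f^*\bigl(v-\sum_{e\in vE'^1}ee^*\bigr)f=r(f)\neq 0$. Hence the map sending generators to generators is not even a homomorphism, and there is no $*$-subalgebra inclusion of the kind you assert. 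The fix is easy: whenever $v\in E'^0$ is an infinite emitter of $E$ emitting at least one edge of $E'$, also adjoin countably infinitely many edges of $vE^1$ (and their ranges), and iterate the closure; $E'$ stays countable, every vertex that is regular in $E'$ is regular in $E$ with the same outgoing edges, so the canonical graded $*$-homomorphism $L_R(E')\to L_R(E)$ exists, is injective by the graded uniqueness theorem (it is nonzero on $rv$ for all $r\in R\setminus\{0\}$, $v\in E'^0$), carries $D_R(E')$ into $D_R(E)$, and its image contains $p$. Alternatively, note that the construction of $\G_E$ and the isomorphism $A_R(\G_E)\cong L_R(E)$ do not actually require countability, only the paper's exposition restricts to countable graphs; but then you cannot simply cite the paper's setup as you do.
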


\begin{proof}
This proof is inspired by the proof of \cite[Proposition 4.4]{JS}.

For $\mu,\nu\in E^*$, we shall write $\mu\le\nu$ to indicate that there is an $\eta\in E^*$ such that $\mu\eta=\nu$, and $\mu<\nu$ to indicate that $\mu\le\nu$ and $\mu\ne\nu$.

Since $L_R(E)=\operatorname{span}_\Z\{\alpha\beta^*:\alpha,\beta\in E^*\}$, it follows that there are finite subsets $A,B$ of $E^*$ and a family $(\lambda_{(\alpha,\beta)})_{(\alpha,\beta)\in A\times B}$ of elements of $R$ such that
\begin{equation*}
p=\sum_{(\alpha,\beta)\in A\times B}\lambda_{(\alpha,\beta)}\alpha\beta^*.	
\end{equation*}
By repeatedly replacing $\alpha\beta^*$ by $\sum_{e\in r(\alpha)E^1}\alpha ee^*\beta^*$ if necessary, we can assume that there is a $k$ such that $B\subseteq E^k\cup\{\mu\in E^*:|\mu|<k\text{ and }r(\mu)\in E^0_\sing\}$. We can also, by letting some of the $\lambda_{(\alpha,\beta)}$ be 0 if necessary, assume that $B\subseteq A$. Notice that $\alpha\beta^*=0$ unless $r(\alpha)=r(\beta)$. For $\beta\in B$, let $A_\beta:=\{\alpha\in A:r(\alpha)=r(\beta)\}$. We shall also assume that if $\beta\in B$, then there is a least one $\alpha\in A_\beta$ such that $\lambda_{(\alpha,\beta)}\ne 0$ (otherwise we just remove $\beta$ from $B$). We claim that $\lambda_{(\alpha,\beta)}=0$ for all $(\alpha,\beta)\in A\times B$ with $\alpha\in A_\beta\setminus\{\beta\}$, and that $\lambda_{(\beta,\beta)}=(-1)^{m_\beta}$ for all $\beta\in B$ where $m_\beta$ is the number of $\beta'$s in $B$ such that $\beta'<\beta$.

Let $B'=\{\beta\in B:\lambda_{(\alpha,\beta)}=0\text{ for all }\alpha\in A_\beta\setminus\{\beta\}\text{ and }\lambda_{(\beta,\beta)}=(-1)^{m_\beta}\}$, and suppose $B'\ne B$. Choose $\beta\in B\setminus B'$ such that $\beta'<\beta$ for no $\beta'\in B\setminus B'$. Let
\begin{equation*}
F_\beta=\{e\in r(\beta)E^1:\beta e\le\beta'\text{ for some }\beta'\in B\setminus\{\beta\}\}
\end{equation*}
and 
\begin{equation*}
\gamma_\beta=\beta-\beta\sum_{e\in F_\beta}ee^*	
\end{equation*}
($F_\beta=\emptyset$ and $\gamma_\beta=\beta$ unless $|\beta|<k$ and $r(\beta)E^1$ is infinite). Then $\beta'^*\gamma_\beta=0$ for $\beta'\in B$ unless $\beta'\le\beta$.

Since $p=p^*p$, it follows that
\begin{equation*}\label{eq:1}\tag{a}
	\gamma_\beta^*p\gamma_\beta=\gamma_\beta^*p^*p\gamma_\beta.
\end{equation*}
Recall that $L_R(E)$ is $\Z$-graded. The degree 0 part of the left-hand side of \eqref{eq:1} is 
\begin{equation}\label{eq:2}\tag{b}
\sum_{\beta'\in B^{\le\beta}}\lambda_{(\beta',\beta')}\Biggl(r(\beta)-\sum_{e\in F_\beta}ee^*\Biggr)	
\end{equation}
where $B^{\le\beta}:=\{\beta'\in B:\beta'\le\beta\}$, and the degree 0 part of the right-hand side of \eqref{eq:1} is 
\begin{equation}\label{eq:3}\tag{c}
	\begin{split}
		\Biggl(\Biggl(\sum_{\beta'\in B^{<\beta}}\overline{\lambda}_{(\beta',\beta')}\Biggr)&\Biggl(\sum_{\beta'\in B^{<\beta}}\lambda_{(\beta',\beta')}\Biggr)+\sum_{\beta'\in B^{<\beta}}\overline{\lambda}_{(\beta',\beta')}\lambda_{(\beta,\beta)}\\&+\sum_{\beta'\in B^{<\beta}}\lambda_{(\beta',\beta')}\overline{\lambda}_{(\beta,\beta)}+\sum_{\alpha\in A_\beta}|\lambda_{(\alpha,\beta)}|^2\Biggr)\biggl(r(\beta)-\sum_{e\in F_\beta}ee^*\biggr)
	\end{split}
\end{equation}
where $B^{<\beta}:=\{\beta'\in B:\beta'<\beta\}$ (we are using here that $\lambda_{(\alpha,\beta')}=0$ for $\beta'\in B^{<\beta}$ and $\alpha\in A\setminus\{\beta'\}$).

Suppose $m_\beta$ is even. Then $\sum_{\beta'\in B^{<\beta}}\lambda_{(\beta',\beta')}=0$ (because $\lambda_{(\beta',\beta')}=(-1)^{m_{\beta'}}$ for each $\beta'\in B^{<\beta}$). Since $\eqref{eq:2}=\eqref{eq:3}$, it follows that $\lambda_{(\beta,\beta)}=\sum_{\alpha\in A_\beta}|\lambda_{(\alpha,\beta)}|^2$. Since $R$ is kind, it follows that $\lambda_{(\alpha,\beta)}=0$ for $\alpha\in A_\beta\setminus\{\beta\}$ and $\lambda_{(\beta,\beta)}=1$ (recall that $\lambda_{(\alpha,\beta)}\ne 0$ for at least one $\alpha\in A_\beta$), but this contradicts the assumption that $\beta\notin B'$.

If $m_\beta$ is uneven, then $\sum_{\beta'\in B^{<\beta}}\lambda_{(\beta',\beta')}=1$, so it follows from the equality of \eqref{eq:2} and \eqref{eq:3} that $1+\lambda_{(\beta,\beta)}+\overline{\lambda_{(\beta,\beta)}}+\sum_{\alpha\in A_\beta}|\lambda_{(\alpha,\beta)}|^2=1+\lambda_{(\beta,\beta)}$ from which we deduce that $\lambda_{(\alpha,\beta)}=0$ for $\alpha\in A_\beta\setminus\{\beta\}$ and $\lambda_{(\beta,\beta)}=-1$, and thus that $\beta\in B'$. So we also reach a contradiction in this case.

We conclude that we must have that $B'=B$, and thus that $\lambda_{(\alpha,\beta)}=0$ for all $(\alpha,\beta)\in A\times B$ with $\alpha\in A_\beta\setminus\{\beta\}$. Since $\alpha\beta^*=0$ for $\alpha\notin A_\beta$, it follows that $p=\sum_{\beta\in B}\lambda_{(\beta,\beta)}\beta\beta^*\in D_R(E)$.
\end{proof}

\begin{corollary}\label{cor}
Let $E$ and $F$ be directed graphs, and let $R$ be a subring of $\C$ that is closed under complex conjugation, contains $1$ and is kind. If $\pi:L_R(E)\to L_R(F)$ is a $*$-algebra homomorphism, then $\pi(D_R(E))\subseteq D_R(F)$.
\end{corollary}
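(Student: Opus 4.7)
The plan is to reduce the statement to Proposition \ref{pro} by exploiting that the distinguished spanning set of $D_R(E)$ consists of projections. Since $D_R(E)=\operatorname{span}_R\{\mu\mu^*:\mu\in E^*\}$ and $\pi$ is $R$-linear, it suffices to prove $\pi(\mu\mu^*)\in D_R(F)$ for every $\mu\in E^*$, and then appeal to closure of $D_R(F)$ under $R$-linear combinations.

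First I would check that each element $\mu\mu^*\in L_R(E)$ is a projection in the sense of \cite{JS}. Self-adjointness is immediate from the definition of the involution, since $(\mu\mu^*)^*=\mu\mu^*$. Idempotency follows from relations (LP2) and (LP3):
\begin{equation*}
\mu\mu^*\mu\mu^* \;=\; \mu\,r(\mu)\,\mu^* \;=\; \mu\mu^*.
\end{equation*}
Next, because $\pi$ is a $*$-algebra homomorphism, the image $\pi(\mu\mu^*)$ inherits both properties: $\pi(\mu\mu^*)^*=\pi((\mu\mu^*)^*)=\pi(\mu\mu^*)$ and $\pi(\mu\mu^*)^2=\pi((\mu\mu^*)^2)=\pi(\mu\mu^*)$. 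Hence $\pi(\mu\mu^*)$ is a projection in $L_R(F)$. Applying Proposition \ref{pro} with $F$ in place of $E$ (which is legitimate since $R$ is kind and the proposition is stated for arbitrary directed graphs) yields $\pi(\mu\mu^*)\in D_R(F)$. Writing any $x\in D_R(E)$ as a finite $R$-linear combination of elements $\mu\mu^*$ then gives $\pi(x)\in D_R(F)$, which is the desired conclusion.

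There is no real obstacle here: all of the work has been done in Proposition \ref{pro}. The corollary is a mechanical transfer, relying only on three observations packaged cleanly in sequence, namely that $D_R(E)$ is $R$-spanned by projections of the form $\mu\mu^*$, that $*$-algebra homomorphisms preserve projections, and that in the target Leavitt path algebra $L_R(F)$ every projection lies in $D_R(F)$.
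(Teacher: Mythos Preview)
Your proof is correct and follows essentially the same approach as the paper. The paper's proof is a one-line citation of Proposition~\ref{pro} together with \cite[Proposition 6.1]{JS}; you have simply unpacked the content of that external reference, namely that $D_R(E)$ is spanned by the projections $\mu\mu^*$, so that preservation of projections under $\pi$ combined with Proposition~\ref{pro} (applied to $F$) gives the result.
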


\begin{proof}
Follows from Proposition \ref{pro} and \cite[Proposition 6.1]{JS}. 
\end{proof}

\begin{proof}[Proof of Theorem \ref{theorem}]
The equivalence of (3) and (4) is proved in \cite{BCW}, and that (4) implies (1) and (2) follows from \cite[Example 3.2]{CS}.

We shall prove $(1)\implies (3)$ and $(2)\implies (3)$.

$(2)\implies (3)$: We shall closely follow the proof of \cite[Lemma 3.5]{JS}. Suppose $\pi:L_S(E)\to L_S(F)$ is a $*$-algebra isomorphism such that $\pi(D_S(E))=D_S(F)$. As in the proof of \cite[Theorem 4.4]{AT}, $\pi$ extends to a $*$-isomorphism $\phi:C^*(E)\to C^*(F)$ satisfying $\phi\circ\iota_{L_S(E)}=\iota_{L_S(F)}\circ\pi$. If $\mu\in E^*$, then 
\begin{equation*}
\phi(s_\mu s_\mu^*)=\phi(\iota_{L_S(E)}(\mu\mu^*))=\iota_{L_S(F)}(\pi(\mu\mu^*))\in \iota_{L_S(F)}(D_S(F))\subseteq\mathcal{D}(F).
\end{equation*} 
Since $\mathcal{D}(E)$ is generated by $\{s_\mu s_\mu^*:\mu\in E^*\}$, it follows that $\phi(\mathcal{D}(E))\subseteq\mathcal{D}(F)$. That $\phi^{-1}(\mathcal{D}(F))\subseteq\mathcal{D}(E)$ follows in a similarly way. Thus $\phi(\mathcal{D}(E))=\mathcal{D}(F)$.

$(1)\implies (3)$: Suppose $\pi:L_R(E)\to L_R(F)$ is a $*$-algebra isomorphism. It follows from Corollary \ref{cor} that $\pi(D_R(E))=D_R(F)$, so an application of the implication $(2)\implies (3)$ with $S=R$ shows that (3) holds.
\end{proof}

\end{document}